% 2022-02-19
% 2022-02-18
% 2022-02-11 by A.M.
% 2022-02-06 by A.M.
% 2022-02-04
% 2022-01-28
% 2022-01-20
% 2022-01-14
% 2021-08-20
% 2021-08-19
% 2021-08-14
% 2021-08-12
% 2021-08-05
% 2021-07-30
% 2021-07-22
% 2021-07-16
% 2021-07-08
% 2021-07-02
% 2021-06-24

%\documentclass[12pt]{article}
\documentclass[12pt]{amsart}
\usepackage{amsmath,amsthm,amsfonts}
\usepackage{hyperref}
\usepackage{enumerate}
\usepackage{hyperref}
\usepackage{url}
%\usepackage{kbordermatrix} % 2020-12-28

%\pagestyle{myheadings}
%\markboth{\autor, \bibdata}{\autor, \bibdata}

\oddsidemargin=0.125in
\evensidemargin=0.125in
\textwidth=6in              % A4 paper is 8.25in wide

\topmargin=0in
\textheight=8.7in              % A4 paper is 11.7in high

\usepackage[shortlabels]{enumitem}
\setlist[enumerate,1]{label={\upshape(\roman*)}}

\newtheorem{theorem}{Theorem}
\newtheorem{lemma}[theorem]{Lemma}

\theoremstyle{definition}
\newtheorem{definition}[theorem]{Definition}

\newtheorem{remark}[theorem]{Remark}

\newcommand{\fX}{\mathfrak{X}}

\newcommand{\allone}{\mathbf{1}}

\title
[Extended double covers]
{Extended double covers of non-symmetric association schemes of class $2$}
% and an analogue for Taylor graphs}
\author{Takuya Ikuta}
\address{Kobe Gakuin University, Kobe, 650-8586, Japan}
\email{ikuta@law.kobegakuin.ac.jp}
\author{Akihiro Munemasa}
\address{Tohoku University, Sendai, 980-8579, Japan}
\email{munemasa@math.is.tohoku.ac.jp}
\thanks{This work was supported by JSPS KAKENHI grant number 20K03527.}

\date{February 18, 2022}
\keywords{association scheme, conference graph, skew Hadamard matrix}
\subjclass[2010]{05E30,05B30}

\dedicatory{Dedicated to the memory of Igor Farad\v{z}ev}

\begin{document}

\begin{abstract}
In this paper, we give a  method to construct 
non-symmetric association schemes of class $3$
from non-symmetric association schemes of class $2$.
This construction is a non-symmetric analogue of 
the construction of Taylor graphs as an antipodal
double cover of a complete graph.
We also mention how our construction interact with
doubling introduced by Pasechnik.

\end{abstract}
\maketitle

\section{Introduction}\label{intro}

The first eigenmatrix of a non-symmetric association scheme of class $2$ on $m$ points
is given by
\begin{equation}\label{eq:nonsymm}
\begin{bmatrix}
1&\frac{m-1}{2}&\frac{m-1}{2}\\
1&\frac{-1+\sqrt{-m}}{2}&\frac{-1-\sqrt{-m}}{2}\\
1&\frac{-1-\sqrt{-m}}{2}&\frac{-1+\sqrt{-m}}{2}
\end{bmatrix},
\end{equation}
where $m\equiv 3\pmod 4$ (see, for example \cite{NS2012}). 
The digraphs defined by a nontrivial relation of such an
association scheme are known as doubly regular tournaments,
and the existence of such a digraph is equivalent to that of
a skew Hadamard matrix. See \cite{KS} for details.

In this paper, it is shown that a non-symmetric
association scheme with the first eigenmatrix \eqref{eq:nonsymm}
gives rise to an association scheme of class $3$
with the first eigenmatrix 
\begin{equation}\label{PP1}
\begin{bmatrix}
 1&m&m&1 \\
 1&\sqrt{-m}&-\sqrt{-m}&-1 \\
 1&-\sqrt{-m}&\sqrt{-m} &-1\\
 1&-1&-1&1
\end{bmatrix}.
\end{equation}
We call the resulting association scheme with
the first eigenmatrix \eqref{PP1}, the 
\emph{extended double cover} of the original association
scheme of class $2$.
The construction is analogous to that of Taylor graphs
(see \cite[Sect.~1.5]{BCN}). In fact, the association scheme
defined by a Taylor graph has the first eigenmatrix
\begin{equation}\label{taylor}
\begin{bmatrix}
 1&m&m&1 \\
 1&\sqrt{m}&-\sqrt{m}&-1 \\
 1&-\sqrt{m}&\sqrt{m} &-1\\
 1&-1&-1&1
\end{bmatrix}.
\end{equation}
%In this sense, the extended double cover $\{C_0,C_1,C_2,C_3\}$ in Theorem~\ref{thm:0514-1}

The extended double cover
%A non-symmetric association scheme of class $3$ with the first eigenmatrix \eqref{PP1}
is a non-symmetric fission of
a cocktail party graph, and self-dual.
According to S.~Y.~Song \cite[(5.3) Lemma]{S},
a self-dual non-symmetric fission of
a complete multipartite graph has
the first eigenmatrix
\[\begin{bmatrix}
1&k_1&k_1&k_2\\
1&\sqrt{-k_1}&-\sqrt{-k_1}&-1\\
1&-\sqrt{-k_1}&\sqrt{-k_1}&-1\\
1&-\frac{k_2+1}{2}&-\frac{k_2+1}{2}&k_2
\end{bmatrix},
\]
and our association scheme with the first eigenmatrix \eqref{PP1} is a
special case where $k_2=1$.

According to I.~A.~Farad\v{z}ev, M.~H.~Klin, and M.~E.~Muzichuk  
\cite[Theorem 2.6.6]{FMM}, D.~Pasechnik invented a construction 
of a non-symmetric association scheme of  class $2$ on $2m+1$ points
with the first eigenmatrix
\begin{equation}\label{FKM-P}
\begin{bmatrix}
1&m&m\\
1&\frac{-1+\sqrt{-(2m+1)}}{2}&\frac{-1-\sqrt{-(2m+1)}}{2}\\
1&\frac{-1-\sqrt{-(2m+1)}}{2}&\frac{-1+\sqrt{-(2m+1)}}{2}
\end{bmatrix},
\end{equation}
provided that there exists an association scheme with
first eigenmatrix \eqref{eq:nonsymm}.
This construction leads to the doubling of 
skew-Hadamard matrices (see \cite[Theorem~14]{KS}).
We will give a direct description of the extended double cover
of the doubling in Theorem~\ref{edc-1}.

One may wonder if an association scheme with
first eigenmatrix \eqref{PP1} is related to a skew-Hadamard
matrix. However, by \cite{nonsym3}, such an association
scheme does not contain a (complex) Hadamard matrix in its Bose--Mesner algebra.

The organization of this paper is as follows.
In Section~\ref{preliminaries} we introduce necessary notation
and give useful properties of non-symmetric association schemes
of class $2$.
In Section~\ref{adj} we construct the adjacency matrices 
needed in our main theorem,
and prove their multiplication formulas.
In Section~\ref{main sec} we present our main results.

\section{Preliminaries}\label{preliminaries}

For fundamentals of the theory of association schemes,
we refer the reader to \cite{BI}.
Let $\fX=(X,\{R_i\}_{i=0}^d)$ be a commutative association scheme of class $d$ on $n$ points. 
Let $A_0,A_1,\dots,A_d$ be the adjacency
matrices of $\fX$. The intersection numbers
$p_{i,j}^{\ell}$ are defined by
\[
A_iA_j=\sum_{\ell=0}^d p_{i,j}^{\ell}A_{\ell},
\]
and the intersection matrices 
$\{B_i\}_{i=0}^d$ are defined by $(B_i)_{j,\ell}=p_{i,j}^{\ell}$.
The linear span $\mathcal{A}=\langle A_0,A_1,\dots,A_d\rangle$ is called
the \emph{Bose--Mesner algebra} of $\fX$, and it has
primitive idempotents $E_0=\frac{1}{|X|}J,E_1,\dots,E_d$.
The first eigenmatrix $P=(P_{i,j})_{0\leq i,j\leq d}$ is 
defined by
\[(A_0,A_1,\dots,A_d)=(E_0,E_1,\dots,E_d)P,\]
and $Q=|X|P^{-1}$ is called the second eigenmatrix of $\fX$.
Then we have 
\[|X|(E_0,E_1,\dots,E_d)=(A_0,A_1,\dots,A_d)Q.\]
Let $k_i$ ($i=0,1,\ldots,d$) and $m_i$ ($i=0,1,\ldots,d$) 
be the valencies and the multiplicities of $\fX$, respectively.
Then the intersection numbers $p_{i,j}^{\ell}$ are given by
\begin{equation}\label{pijk}
p_{i,j}^{\ell}=\frac{1}{nk_{\ell}}\sum_{\nu=0}^dm_{\nu}\overline{P_{\nu,i}}\ \overline{P_{\nu,j}}P_{\nu,\ell}
\end{equation}

Now, assume that $\fX$ is a non-symmetric
association scheme of class $2$ on $m$ points.
Then $k_1=k_2=m_1=m_2=(m-1)/2$.
By \eqref{eq:nonsymm} and \eqref{pijk} we have
\begin{align}
A_1^2&=\frac{m-3}{4}A_1+\frac{m+1}{4}A_2, \label{eq:103} \\
A_2^2&=\frac{m+1}{4}A_1+\frac{m-3}{4}A_2, \label{eq:104} \\
A_1A_2&=\frac{m-1}{2}A_0+\frac{m-3}{4}(A_1+A_2). \label{eq:105}
\end{align}
Then by \eqref{eq:103}, \eqref{eq:104}, and \eqref{eq:105} we have
\begin{align}
A_1^2+A_2^2&=\frac{m-1}{2}(A_1+A_2), \label{eq:78-2}\\
J+2A_1A_2&=mA_0+\frac{m-1}{2}(A_1+A_2). \label{eq:78-3}
\end{align}

\section{Construction of adjacency matrices}\label{adj}

\begin{definition}\label{dfn:extD}
Let $\fX$ be a non-symmetric association scheme of class $2$ on $m$ points, and
denote its adjacency matrices by $A_0=I_m,A_1,A_2$.
Define
\begin{align}
C_0&=I_{2(m+1)}, \label{eq:0713-0} \\
C_1&=\begin{bmatrix}
  0&\allone&0&0 \\
  0&A_1&A_2&\allone^\top \\
  \allone^\top&A_2&A_1&0 \\
  0&0&\allone&0
  \end{bmatrix}, \label{eq:0713-1}\\
 C_2&=C_1^\top, \label{eq:0713-2}\\
 C_3&=J-C_0-C_1-C_2, \label{C3}
\end{align}
where $\allone$ is the all-one row vector of length $m$.
The association scheme defined by the set of adjacency matrices
%We call the set 
$\{C_0,C_1,C_2,C_3\}$ 
%of matrices 
is called
the \emph{extended double cover} of $\fX$.
\end{definition}

We will show in the next section that 
%One of the main result of this paper is to show that 
$\{C_0, C_1, C_2, C_3\}$ is indeed 
the set of adjacency matrices of an association scheme of class $3$.

By 
\eqref{eq:0713-0}--\eqref{C3}, we have
\begin{align}
C_2&=\begin{bmatrix}
 0&0&\allone&0 \\
 \allone^\top&A_2&A_1&0 \\
 0&A_1&A_2&\allone^\top \\
 0&\allone&0&0
 \end{bmatrix}, \label{eq:0713-3}\\
C_3
&=\begin{bmatrix}
 0&0&0&1 \\
 0&0&I_m&0\\
 0&I_m&0&0 \\
 1&0&0&0
\end{bmatrix}. \label{eq:0713-4}
\end{align}

\begin{remark}\label{reverse}
In Definition~\ref{dfn:extD}, if $A_1$ and $A_2$ define
isomorphic digraphs, then it is easy to see that $C_1$ and $C_2$
define isomorphic digraphs.
\end{remark}

The next lemma is necessary in the next section in order
to establish our main result.

\begin{lemma}\label{lem:cal}
We have the following:
\begin{align}
C_1^2&=C_2^2=\frac{m-1}{2}(C_1+C_2)+mC_3, \label{eq:0713-4}\\
C_1C_2&=C_2C_1=mC_0+\frac{m-1}{2}(C_1+ C_2), \label{eq:0713-5}\\
C_1C_3&=C_3C_1=C_2, \label{eq:0713-6}\\
C_2C_3&=C_3C_2= C_1, \label{eq:0713-7}\\
C_3^2&=C_0. \label{eq:0713-8}
\end{align}
\end{lemma}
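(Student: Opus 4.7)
The plan is to dispatch the easy identities first and reserve the bulk of the work for $C_1^2$, from which the rest follows algebraically. I start with $C_3$: viewed on the natural $4$-block decomposition of $\C^{2(m+1)}$ with block sizes $1,m,m,1$, the matrix $C_3$ is the symmetric permutation that reverses the four blocks, so $C_3^2 = I = C_0$, giving \eqref{eq:0713-8}. Right multiplication by $C_3$ reverses the block columns of any matrix, and comparing \eqref{eq:0713-1} with \eqref{eq:0713-3} shows this exchange turns $C_1$ into $C_2$; thus $C_1 C_3 = C_2$. Left multiplication by $C_3$ reverses block rows, and this too sends $C_1$ to $C_2$, so $C_3 C_1 = C_2$, proving \eqref{eq:0713-6}. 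Multiplying \eqref{eq:0713-6} by $C_3$ on either side and invoking $C_3^2 = C_0$ then yields \eqref{eq:0713-7}.

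The main step is $C_1^2$. I would expand the $4 \times 4$ block product entry by entry, using three ingredients: the valency relations $\allone A_i = A_i \allone^\top = \frac{m-1}{2}\allone$ for $i=1,2$; the identities $\allone \allone^\top = m$ and $\allone^\top \allone = J$ (the $m\times m$ all-ones); and the scheme relations \eqref{eq:78-2} and \eqref{eq:78-3}. The two on-diagonal middle blocks each evaluate to $A_1^2 + A_2^2$, which \eqref{eq:78-2} collapses to $\frac{m-1}{2}(A_1+A_2)$; the two off-diagonal middle blocks each evaluate to $2 A_1 A_2 + J$, which \eqref{eq:78-3} collapses to $m I_m + \frac{m-1}{2}(A_1+A_2)$; the remaining twelve blocks reduce to scalars, $\allone$, $\allone^\top$, or zero by direct counting. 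Assembling, the result agrees block-for-block with $\frac{m-1}{2}(C_1+C_2) + m C_3$. Since $C_2 = C_1^\top$ and the right-hand side is symmetric, $C_2^2 = (C_1^2)^\top$ equals the same expression, completing \eqref{eq:0713-4}.

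For \eqref{eq:0713-5} I use $J = C_0 + C_1 + C_2 + C_3$, so $C_1+C_2 = J - C_0 - C_3$; and note that every row of $C_1$ sums to $m$ by a direct block count, giving $C_1 J = m J$. Then
\[ C_1(C_1+C_2) = C_1 J - C_1 - C_1 C_3 = mJ - C_1 - C_2, \]
and subtracting the known $C_1^2$ leaves $C_1 C_2 = m C_0 + \frac{m-1}{2}(C_1+C_2)$. The identical argument with $C_2$ replacing $C_1$ (using $C_2 J = mJ$ from the column sums of $C_1$, and $C_2 C_3 = C_1$) produces the matching expression for $C_2 C_1$, completing \eqref{eq:0713-5}. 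The only non-routine step in the whole argument is recognizing that the scheme relations \eqref{eq:78-2} and \eqref{eq:78-3} are precisely what is needed to collapse the four central blocks of $C_1^2$; everything else is bookkeeping.
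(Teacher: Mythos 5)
Your proposal is correct and follows essentially the same route as the paper: the $C_3$ identities are immediate from its block-reversal structure, the heart of the argument is the blockwise expansion of $C_1^2$ collapsed by \eqref{eq:78-2} and \eqref{eq:78-3}, and \eqref{eq:0713-5} is then deduced from the other identities. Your two small variations — getting $C_2^2=(C_1^2)^\top$ from the symmetry of the right-hand side instead of recomputing, and deriving $C_1C_2$ via $J=C_0+C_1+C_2+C_3$ and the row sums $C_1J=mJ$ rather than via $C_1C_2=C_1^2C_3$ — are both valid and do not change the substance.
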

\begin{proof}
We can easily see that \eqref{eq:0713-6} and \eqref{eq:0713-8} hold.
Then \eqref{eq:0713-7} follows immediately from \eqref{eq:0713-6} and \eqref{eq:0713-8}.

Since
\begin{align*}
(C_1^2)_{1,1}&=(C_2^2)_{4,4}=0, \displaybreak[0]\\
(C_1^2)_{1,4}&=(C_2^2)_{4,1}=m, \displaybreak[0]\\
(C_1^2)_{1,2}&=(C_2^2)_{1,3}=(C_2^2)_{4,2}=(C_2^2)_{4,3}=\frac{m-1}{2}\allone, \displaybreak[0]\\
(C_1^2)_{2,1}&=(C_2^2)_{2,4}=(C_2^2)_{3,1}=(C_2^2)_{3,4}=\frac{m-1}{2}\allone^\top, \displaybreak[0]\\
(C_1^2)_{2,2}&=(C_1^2)_{3,3}=A_1^2+A_2^2=\frac{m-1}{2}(A_1+A_2) &&\text{(by \eqref{eq:78-2})},\displaybreak[0]\\
(C_1^2)_{3,2}&=(C_2^2)_{2,3}=J+2A_1A_2=mI_m+\frac{m-1}{2}(A_1+A_2) &&\text{(by \eqref{eq:78-3})},\displaybreak[0]
\end{align*}
we have $C_1^2=\frac{m-1}{2}(C_1+C_2)+mC_3$.
Similarly, we have $C_2^2=\frac{m-1}{2}(C_1+C_2)+mC_3$.

Finally, \eqref{eq:0713-5} follows from \eqref{eq:0713-4} and \eqref{eq:0713-6}--\eqref{eq:0713-8}.
\end{proof}

\bigskip

%%%%%%%%%%%%%%%
\section{Main results}\label{main sec}

\begin{theorem}\label{thm:0514-1}
%Let $\mathcal{C}=\{C_0,C_1,C_2,C_3\}$ be 
The extended double cover of
a non-symmetric association scheme 
%$\fX$ 
of class $2$ on $m$ points
%Then $\mathcal{C}$ is the set of adjacency matrices of 
is an association scheme with the first eigenmatrix \eqref{PP1}.
\end{theorem}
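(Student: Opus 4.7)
The plan is to first verify that $\{C_0,C_1,C_2,C_3\}$ is the set of adjacency matrices of a commutative association scheme of class $3$ on $2(m+1)$ points, and then to compute its first eigenmatrix.

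For the association scheme axioms, I would proceed as follows. Each $C_i$ is a $0$-$1$ matrix: trivially for $C_0$ from \eqref{eq:0713-0}, for $C_1,C_2$ from \eqref{eq:0713-1} and \eqref{eq:0713-3}, and for $C_3$ from the explicit form \eqref{eq:0713-4}. The partition property $C_0+C_1+C_2+C_3=J$ holds by definition \eqref{C3}, and linear independence of $\{C_0,C_1,C_2,C_3\}$ follows because their supports are pairwise disjoint. Closure under transpose is immediate: $C_1^\top=C_2$ by definition and $C_3^\top=C_3$ from \eqref{eq:0713-4}. Closure under matrix multiplication (with nonnegative integer structure constants) and commutativity is precisely the content of Lemma~\ref{lem:cal}, combined with the obvious $C_0C_i=C_iC_0=C_i$.

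To determine the first eigenmatrix, I would simultaneously diagonalize the commuting matrices $C_1,C_2,C_3$ by exploiting the block structure. Since $C_3$ is an involutory permutation matrix, $\C^{2(m+1)}$ splits into its $\pm 1$-eigenspaces $V_{\pm}$, each of dimension $m+1$, consisting of vectors of the form $(a,v,\pm v,\pm a)^\top$. On $V_+$ the action of $C_1$ involves $A_1+A_2=J_m-I_m$ on the middle blocks, and I would read off the eigenvector $(1,\allone^\top,\allone^\top,1)^\top$ with eigenvalue $m$ together with an $m$-dimensional complement of eigenvalue $-1$, producing the first and last rows of \eqref{PP1}. On $V_-$ the action of $C_1$ involves $A_1-A_2=A_1-A_1^\top$ on the middle blocks, whose spectrum on $\allone^\perp$ is $\pm\sqrt{-m}$ each with multiplicity $(m-1)/2$ by \eqref{eq:nonsymm}; adjoining the two-dimensional subspace spanned by vectors of the form $(a,c\allone^\top,-c\allone^\top,-a)^\top$, on which the action reduces to a quadratic $\lambda^2=-m$, supplies the remaining eigenvectors and yields eigenvalues $\pm\sqrt{-m}$ of $C_1$ each with multiplicity $(m+1)/2$. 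This produces the two middle rows of \eqref{PP1}; the $C_2$-column is then the complex conjugate of the $C_1$-column.

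The main obstacle is the diagonalization of $C_1$ on $V_-$, since this step combines the non-real spectrum of $A_1-A_1^\top$ arising from the non-symmetric nature of $\fX$ with a separate computation on the low-dimensional ``boundary'' subspace. A cleaner alternative is to bypass the eigenvector analysis altogether by forming the $4\times 4$ intersection matrix $B_1$ directly from Lemma~\ref{lem:cal} and verifying that its characteristic polynomial is $(\lambda-m)(\lambda+1)(\lambda^2+m)$, whose four roots constitute the $C_1$-column of \eqref{PP1}.
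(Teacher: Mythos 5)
Your argument is correct, and its second half takes a genuinely different route from the paper's. The first step coincides: closure under multiplication is exactly Lemma~\ref{lem:cal} (you also spell out the remaining axioms --- $0$--$1$ entries, partition of $J$, closure under transposition --- which the paper leaves implicit). For the eigenmatrix, however, the paper diagonalizes nothing: it forms the intersection matrices $B_1,B_2,B_3$ from Lemma~\ref{lem:cal}, writes down the four idempotents $E_0,\dots,E_3$ explicitly as linear combinations of the $C_i$, verifies $E_iE_j=\delta_{i,j}E_i$, reads off the second eigenmatrix from the coefficients, and inverts to obtain \eqref{PP1}. Your primary route instead splits $\C^{2(m+1)}$ into the $\pm1$-eigenspaces of the involution $C_3$ and computes the spectrum of $C_1$ on each; this derives the answer rather than verifying a guessed one, and your details check out (on $V_-$ the two-dimensional boundary block is $\left[\begin{smallmatrix}0&m\\-1&0\end{smallmatrix}\right]$, giving $\lambda^2=-m$, and the multiplicities $1,\tfrac{m+1}{2},\tfrac{m+1}{2},m$ come out right). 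One point worth making explicit in your main route: the $C_2$-column is the conjugate of the $C_1$-column because $C_2=C_1^\top=C_1^{*}$ and $C_1$ is normal (it commutes with $C_2$ by \eqref{eq:0713-5}); equivalently, $C_2=C_1C_3$ forces the $C_2$-eigenvalue on each common eigenspace to be $\varepsilon\theta$, where $\theta$ is the $C_1$-eigenvalue and $\varepsilon=\pm1$ the $C_3$-eigenvalue. Your fallback via the characteristic polynomial $(\lambda-m)(\lambda+1)(\lambda^2+m)$ of $B_1$ is closest in spirit to the paper's use of the intersection matrices, though it too needs this last observation to pair each $C_1$-eigenvalue with the correct $C_2$- and $C_3$-eigenvalues when assembling all four rows of \eqref{PP1}.
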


\begin{proof}
Suppose $\fX$ is a non-symmetric association scheme 
of class $2$ on $m$ points.
Let $C_0,C_1,C_2,C_3$ be the matrices given in
Definition~\ref{dfn:extD}, and 
let $\mathcal{A}=\langle C_0,C_1,C_2,C_3\rangle$ be their
linear span over the field of complex numbers.
%of the extended double cover $\mathcal{C}$. 
First we observe that 
$\mathcal{A}$ is closed under multiplication by Lemma~\ref{lem:cal}.
Thus $\mathcal{A}$ is the Bose--Mesner algebra of 
an association scheme $\tilde{\fX}$ of class $3$.

Secondly we compute the first eigenmatrix of $\tilde{\fX}$.
By Lemma~\ref{lem:cal} the intersection matrices $B_1, B_2, B_3$ of 
$\tilde{\fX}$ are given by
\begin{align}
B_1&=\begin{bmatrix}
 0&1&0&0 \\
 0&\frac{m-1}{2}&\frac{m-1}{2}&m \\
 m&\frac{m-1}{2}&\frac{m-1}{2}&0 \\
 0&0&1&0
\end{bmatrix}, \label{B1}\\
B_2&=\begin{bmatrix}
 0&0&1&0 \\
 m&\frac{m-1}{2}&\frac{m-1}{2}&0 \\
 0&\frac{m-1}{2}&\frac{m-1}{2}&m \\
 0&1&0&0
\end{bmatrix}, \label{B2}\\
B_3&=\begin{bmatrix}
 0&0&0&1 \\
 0&0&1&0 \\
 0&1&0&0 \\
 1&0&0&0
\end{bmatrix}.\label{B3}
\end{align}
Let
\begin{align}
E_0&=\frac{1}{2(m+1)}(C_0+C_1+C_2+C_3), \label{eq:18-1}\\
E_1&=\frac{1}{4}(C_0-\frac{1}{\sqrt{m}}(C_1-C_2)-C_3), \label{eq:18-2}\\
E_2&=\frac{1}{4}(C_0+\frac{1}{\sqrt{m}}(C_1-C_2)-C_3), \label{eq:18-3}\\
E_3&=\frac{1}{2(m+1)}(mC_0-(C_1+C_2)+mC_3). \label{eq:18-4}
\end{align}
Then by \eqref{B1}--\eqref{B3} we have $E_iE_j=\delta_{i,j}E_i$.
By \eqref{eq:18-1}--\eqref{eq:18-4} the second eigenmatrix of 
$\tilde{\fX}$ is given by
\[
\begin{bmatrix}
 1&\frac{m+1}{2}&\frac{m+1}{2}&m \\
 1&-\frac{m+1}{2\sqrt{m}}&\frac{m+1}{2\sqrt{m}}&-1 \\
 1&\frac{m+1}{2\sqrt{m}}&-\frac{m+1}{2\sqrt{m}}&-1 \\
 1&-\frac{m+1}{2}&-\frac{m+1}{2}&m
\end{bmatrix}.
\]
Then the first eigenmatrix of $\tilde{\fX}$ is given by \eqref{PP1}.
\end{proof}

\begin{remark}
In the database of \cite{Hanaki}, 
as08[6], as16[11], and as24[14] can be constructed by Theorem~\ref{thm:0514-1}.
\end{remark}

\begin{remark}
Let $\{A_0, A_1, A_2\}$ be the set of the adjacency matrices of 
a symmetric association scheme of class $2$ with $k=2\mu$ on $m$ points, and
\begin{align*}
D_0&=I_{2(m+1)},\\
D_1&=\begin{bmatrix}
  0&\allone&0&0 \\
  \allone^\top&A_1&A_2&0 \\
  0^\top&A_2&A_1&\allone^\top \\
  0&0&\allone&0
  \end{bmatrix}, \\
 D_2&=\begin{bmatrix}
  0&0&\allone&0 \\
  0&A_2&A_1&\allone^\top \\
  \allone^\top&A_1&A_2&0 \\
  0&\allone&0&0
  \end{bmatrix},\\
 D_3&=J-D_0-D_1-D_2.
\end{align*}
Then, $D_1$ is the adjacency matrix of 
%the set $\{D_0,D_1,D_2,D_3\}$ is 
a Taylor graph (see \cite[Sect.~1.5]{BCN}),
%considered in \cite{Biggs}, 
and its first eigenmatrix
is given by \eqref{taylor}.
%\[
%\begin{bmatrix}
% 1&m&m&1 \\
% 1&\sqrt{m}&-\sqrt{m}&-1 \\
% 1&-\sqrt{m}&\sqrt{m} &-1\\
% 1&-1&-1&1
%\end{bmatrix}.
%\]
In this sense, the extended double cover 
%$\{C_0,C_1,C_2,C_3\}$ in Theorem~\ref{thm:0514-1}
can be regarded as a non-symmetric analogue of Taylor graphs.
\end{remark}

The following construction is due to D.~Pasechnik
%For comparison, we display the adjacency matrices of
%an association scheme with the first eigenmatrix \eqref{FKM-P}
%constructed in 
(announced in \cite{Fa1}; 
see \cite[Theorem 2.6.6]{FMM}).

\begin{definition}\label{doubling}
Let $\fX$ be a non-symmetric association scheme of class $2$ on $m$ points, and
denote its adjacency matrices by $A_0=I_m,A_1,A_2$.
Define
\[
\tilde{A_0}=I_{2m+1}, \ 
\tilde{A_1}=\begin{bmatrix}
 0&\allone&0 \\
  0&A_1&A_2+I_m \\
 \allone^\top&A_2&A_2
\end{bmatrix}, \
\tilde{A_2}=\begin{bmatrix}
 0&0&\allone \\
 \allone^\top&A_2&A_1 \\
 0&A_1+I_m&A_1
\end{bmatrix},
\]
where $A_2^\top=A_1$.
The association scheme with adjacency matrices $\{\tilde{A_0},
\tilde{A_1},\tilde{A_2}\}$ is called the \emph{doubling} of $\fX$.
\end{definition}

In Definition~\ref{doubling}, even if $A_1$ and $A_2$ define
isomorphic digraphs, the matrices $\tilde{A}_1$ and $\tilde{A}_2$
may define non-isomorphic digraphs.

Since the doubling of a 
non-symmetric association scheme of class $2$ on $m$ points
%set $\{\tilde{A_0},\tilde{A_1},\tilde{A_2}\}$ 
%of the adjacency matrices in Definition~\ref{doubling}
is a non-symmetric association scheme of class $2$ on $2m+1$ points,
we can construct the extended double cover on $4(m+1)$ points by Theorem~\ref{thm:0514-1}.
Then we have the following.

\begin{theorem}\label{edc-1}
Let $\{C_0,C_1,C_2,C_3\}$ be set of the adjacency matrices of 
the extended double cover of a non-symmetric association scheme $\fX$
of class $2$ on $m$ points.
Then the set of adjacency matrices of the
extended double cover of the doubling of $\fX$
is given by $\{C_0',C_1',C_2',C_3'\}$, where
\begin{align}
C_0'&=I_{4(m+1)}, \label{CC0}\\
C_1'&=\begin{bmatrix}
  C_1&C_2+I_{2(m+1)} \\
  C_2+C_3&C_2
%  C_1&P'C_2P'+I_{2(m+1)} \\
%  P'C_2P'+C_3&C_2
  \end{bmatrix}, \label{CC1}\\
C_2'&=\begin{bmatrix}
  C_2&C_1+C_3 \\
  C_1+I_{2(m+1)}& C_1
%  C_2&P'C_1P'+C_3 \\
%  P'C_1P'+I_{2(m+1)}& C_1
  \end{bmatrix},\label{CC2}\\
C_3'&=\begin{bmatrix}
  C_3&0 \\
  0& C_3
  \end{bmatrix}.\label{CC3}
\end{align}
%and
%\begin{equation}\label{PPd}
%P'=\begin{bmatrix}
%  0&0&0&1 \\
%  0^\top&I_m&0_m&0^\top \\
%  0^\top&0_m&I_m&0^\top \\
%  1&0&0&0
%  \end{bmatrix}.
%\end{equation}
\end{theorem}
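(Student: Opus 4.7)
The plan is to compute the adjacency matrices of the extended double cover of $\tilde{\fX}$ directly from Definition~\ref{dfn:extD} and identify them with the block matrices $C_i'$ after a suitable vertex relabeling. Applying Definition~\ref{dfn:extD} to $\tilde{\fX}$ (with $\tilde{A}_1, \tilde{A}_2$ and $2m+1$ replacing $A_1, A_2$ and $m$) produces four $4(m+1)\times 4(m+1)$ adjacency matrices whose vertex set decomposes naturally into eight parts of sizes $(1,1,m,m,1,m,m,1)$, which we label $\{\alpha\}, \{\infty_1\}, X_1, X_1', \{\infty_2\}, X_2, X_2', \{\beta\}$; here $\infty_i, X_i, X_i'$ arise from the internal $(1,m,m)$ decomposition of the $i$th copy of $\tilde{\fX}$'s vertex set inside the extended double cover.

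The key step is to re-order these $8$ parts as $(\infty_1, X_1, X_2, \infty_2, \beta, X_1', X_2', \alpha)$, with sizes $(1,m,m,1,1,m,m,1)$. Splitting this sequence into the first four parts (Group~A) and the last four (Group~B) yields a decomposition into two sub-blocks of size $2(m+1)$ matching the block structure of the claimed $C_i'$. Under this relabeling, the antipodal matrix becomes $\diag(C_3, C_3) = C_3'$: within Group~A it pairs $\infty_1$ with $\infty_2$ and each vertex of $X_1$ with the corresponding vertex of $X_2$, while within Group~B it pairs $\alpha$ with $\beta$ and each vertex of $X_1'$ with that of $X_2'$; the identity matrix trivially becomes $C_0'$.

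For the first nontrivial relation, we substitute the explicit block forms of $\tilde{A}_1, \tilde{A}_2$ from Definition~\ref{doubling} into its definition, re-partition the resulting matrix according to Groups~A,~B, and check that the four $4\times 4$ sub-blocks (each of block sizes $(1,m,m,1)$) coincide with $C_1$, $C_2+I$, $C_2+C_3$, and $C_2$, using the explicit block forms of $C_1, C_2, C_3$ from Definition~\ref{dfn:extD}. The identity for its transpose then follows automatically. The main obstacle is pinning down the correct relabeling: although several bipartitions make the antipodal matrix block-diagonal with $C_3$ on each block, only the one grouping $\{\alpha, \beta\}$ with $X_1' \cup X_2'$ (rather than with $X_1 \cup X_2$) produces the precise identities stated, and the internal orderings are then forced by matching, say, the row corresponding to the first vertex of Group~A. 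Once the relabeling is fixed, the remaining $4\times 4$ block checks reduce to routine bookkeeping.
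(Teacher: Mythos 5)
Your proposal is correct and takes essentially the same route as the paper: substitute $\tilde{A}_1,\tilde{A}_2$ into Definition~\ref{dfn:extD}, permute the eight natural parts of sizes $(1,1,m,m,1,m,m,1)$ to expose the $2\times 2$ block structure, read off the four sub-blocks of $C_1'$, and deduce $C_2'$ by transposition and $C_3'$ from the antipodal pairing. The paper's reordering $[5,6,3,2,1,7,4,8]$ differs from yours only by the reversal within each half, a permutation that fixes $C_1$, $C_2$, and $C_3$, so the two relabelings yield identical block identities.
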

\begin{proof}
Substituting the adjacency matrices
$\tilde{A_0},\tilde{A_1},\tilde{A_2}$ in Definition~$\ref{doubling}$ 
into \eqref{eq:0713-1} directly,
we have
\[
\left[ \begin{array}{c|ccc|ccc|c}
 0&1&\allone&\allone&0&0&0&0 \\
 \hline
 0&0&\allone&0&0&0&\allone&1 \\
 0&0&A_1&A_2+I_m&\allone^\top&A_2&A_1&\allone^\top \\
 0&\allone^\top&A_2&A_2&0&A_1+I_m&A_1&\allone^\top \\
 \hline
 1&0&0&\allone&0&\allone&0&0 \\
 \allone^\top&\allone^\top&A_2&A_1&0&A_1&A_2+I_m&0 \\
 \allone^\top&0&A_1+I_m&A_1&\allone^\top&A_2&A_2&0 \\
 \hline
 0&0&0&0&1&\allone&\allone&0 \\
\end{array} \right].
\]
Rearranging the rows and columns in the order $[5,6,3,2,1,7,4,8]$,
we obtain
\begin{align*}
&\left[\begin{array}{cccc|cccc}
 0 & \allone & 0 & 0 & 1 & 0 & \allone & 0 \\
 0 & A_1 & A_2 & \allone^\top & \allone^\top & A_2 +I_m & A_1 & 0 \\
 \allone^\top & A_2 & A_1 & 0 & 0 & A_1 & A_2 +I_m & \allone^\top \\
 0 & 0 & \allone & 0 & 0 & \allone & 0 & 1 \\ \hline
 0 & 0 & \allone & 1 & 0 & 0 & \allone & 0 \\
 \allone^\top & A_2 & A_1 +I_m & 0 & \allone^\top & A_2 & A_1 & 0 \\
 0 & A_1 +I_m & A_2 & \allone^\top & 0 & A_1 & A_2 & \allone^\top \\
 1 & \allone & 0 & 0 & 0 & \allone & 0 & 0 
\end{array}\right]
\\&=
\begin{bmatrix}
  C_1&C_2+I_{2(m+1)} \\
  C_2+C_3&C_2
  \end{bmatrix}
\\&=C'_1.
\end{align*}
%We decompose the indies $[1,\ldots,8]$ of row (or column) numbers
%into $A=[1,3,6,8]$ and $B=[2,4,7,5]$ in this order.
%Then we have four blocks, and each block has the following.
%\begin{align*}
%F_{A,A}&=C_1, \\
%F_{B,B}&=C_2, \\
%F_{A,B}&=\begin{bmatrix}
%  1&\allone&0&0 \\
%  0^\top&A_1+I_m&A_1&\allone^\top \\
%  \allone^\top&A_1&A_2+I_m&0^\top \\
%  0&0&\allone&0
%  \end{bmatrix} \\
%  &=P'C_2P'+I_m, \\
%F_{B,A}&=\begin{bmatrix}
%  0&\allone&0&1 \\
%  0^\top&A_2&A_1+I_m&\allone^\top \\
%  \allone^\top&A_1+I_m&A_2&0^\top \\
%  1&0&\allone&0
%  \end{bmatrix} \\
%  &=P'C_2P'+C_3.
%\end{align*}
Hence we have \eqref{CC1}.
By \eqref{eq:0713-3} and \eqref{CC1} we have \eqref{CC2}.
By \eqref{C3}, \eqref{CC0}--\eqref{CC2} we have \eqref{CC3}.
\end{proof}

\bigskip

\subsection*{Acknowledgements.} We would like to thank
the anonymous referee for pointing out the connections to Taylor graphs.

\end{document}